\documentclass{amsart}
\usepackage{amsthm,amsmath,amsmath,amssymb,amsbsy,graphicx}

\theoremstyle{plain}
\newtheorem{theorem}{Theorem}

\newtheorem{lemma}[theorem]{Lemma}

\newtheorem{problem}[theorem]{Problem}

\theoremstyle{definition}

\newtheorem{example}[theorem]{Example}

\theoremstyle{remark}


\newcommand{\R}{\mathbb{R}}

\newcommand{\C}{\mathcal{C}}
\newcommand{\REG}{\mathcal{R}}
\renewcommand{\AA}{\mathcal{A}}
\begin{document}

\title{Projection Volumes of Hyperplane Arrangements}

\author{Caroline J. Klivans}
\address{Departments of Mathematics and Computer Science \\ 
The University of Chicago}

\author{Ed Swartz} 
\address{Department of Mathematics \\ 
Cornell University}

\thanks{The work here was done while the first author was a visiting scholar in the mathematics department at Cornell University.  The second author was partially supported by NSF grant DMS-0900912}

\keywords{angle sum, characteristic polynomial, hyperplane arrangement, zonotope}

\begin{abstract}
We prove that for any finite real hyperplane arrangement the average
projection volumes of the maximal cones is given by the coefficients
of the characteristic polynomial of the arrangement.  This settles the
conjecture of Drton and Klivans that this held for all finite real
reflection arrangements.  The methods used are geometric and
combinatorial.  As a consequence we determine that the angle sums of a
zonotope are given by the characteristic polynomial of the order dual
of the intersection lattice of the arrangement.  \end{abstract} \maketitle

\section{Introduction}
\label{sec:intro}

Given a polyhedral cone $\mathcal{C} \in \mathbb{R}^n$, consider the
orthogonal projection of an arbitrary point $z \in \mathbb{R}^n$ onto
$\mathcal{C}$.  The work here is concerned with the dimension of the
face $z$ projects onto.  Specifically, consider the following problem
as formulated in~\cite{DK}:

\begin{problem}
\label{prob:1}
Which fraction of the unit sphere in $\mathbb{R}^n$, as measured by
surface volume, is occupied by the points $z$ for which the orthogonal  projection of $z$ onto $\mathcal{C}$ lies in the interior of a $k$-dimensional face of $\mathcal{C}$?
\end{problem}

The study of these projection volumes is motivated by $p$-value
calculations in statistical hypothesis testing; see~\cite{DK} for more on this motivation.  
There the projection volumes of fundamental chambers of finite real
reflection arrangements are investigated.  It is shown that for
irreducible reflection groups of type $A_n$, $B_n$ and $D_n$, the
projection volumes are given by the coefficients of the characteristic
polynomial of the corresponding reflection arrangement.  These results
are natural extensions of those of De Conini, Procesi,
Stembridge, and Denham~\cite{DCP,Denham} for zero dimensional projections onto reflection arrangements.

Here we extended and strengthen these results. In particular we
offer a positive answer to the main Conjecture of \cite{DK} which
states that the projection volumes of any finite real reflection
arrangement are given by the coefficients of its characteristic
polynomial.

This follows as a corollary of a stronger result, our
Theorem~\ref{thm:main}, which considers projections onto cones formed
from regions of arbitrary finite real hyperplane arrangements.  In
order to understand this more general case, we shift our perspective
and consider the \emph{average} projection volumes over all regions of
a given hyperplane arrangement.  Theorem~\ref{thm:main}  states that
for an arbitrary finite real hyperplane arrangement the average
projection volumes are given by the coefficients of its
characteristic polynomial.  As all regions of a reflection arrangement are isometric, Corollary $5$ of \cite{DK} holds for all finite real reflection groups.

The methods utilized in \cite{DK} are algebraic, drawing on the
structure of a reflection group.  In contrast,  we use
combinatorial and geometric techniques to prove
our main result.  Projection volumes of cones are related to
the characteristic polynomial through the theory of polytope angle
sums.  To any linear hyperplane arrangement there are naturally
associated dual polytopes, called zonotopes.  A well-known result of
Zaslavsky gives the face numbers of  zonotopes in terms of the
M\"{o}bius function of the intersection lattice of the arrangement.  A
little known result of Perles and Shephard then connects the face
numbers of a zonotope to its angle sums.  Combining
these results, Theorem~\ref{thm:main} equates the angle sums of
certain faces of the zonotopes to both the value of the M\"{o}bius
function at a given intersection and the projection volume onto that
intersection.  We further determine that the angle sums of a zonotope are given by the characteristic polynomial of the order dual of the lattice of flats of the arrangement.

\section{Projection Volumes} \label{projvol}

Let $\C$ be a polyhedral set in $\mathbb{R}^d$. Unless otherwise noted,  $\C$ will always be a cone.   For an
arbitrary point $z \in \mathbb{R}^d$, let $\pi_{\C}(z)$ be
the orthogonal projection of $z$ onto $\C$.  Say that
$\pi_{\C}(z)$ has {\it $k$-dimensional projection} if $\pi_{\C}(z)$
lies in the relative interior of a $k$-dimensional face of
$\C$.  Define $\nu_k$ to be the ratio of volume 
of $\mathbb{R}^d$ occupied by points $x$ for which the projection
$\pi_{\C}$ is $k$-dimensional.  There are several ways of making this precise.  Let $X$ be a cone.  Define the volume of $X$ using any of the following equivalent definitions:

\begin{eqnarray*}
\textrm{Volume of } X
& =& \textrm{The ratio of volume of $\R^d$ occupied by $X$} \\
&=& \displaystyle \frac{|X \cap S^{d-1}|}{|S^{d-1}|} \\
&=& \displaystyle \frac{|X \cap B^d|}{|B^d|},
\end{eqnarray*}

\noindent where $|\ \  |$ is Lebesgue measure, $S^{d-1}$ is the unit sphere, and $B^d$ is the unit ball.

The set of points $z$ which project into the interior of  a face $F$ of $\C$ is a cone which we denote by $X_F$.  Set $\nu(F) =$ the volume of $X_F$. If $F$ is contained in several polyhedral sets, then we use $\nu_\C(F)$ to specify which one.   Finally, we define $\nu_k$ to be   the sum of the $\nu(F)$ over all $k$-dimensional faces $F$ of $\C.$ The $\nu_k$ are called the
projection volumes of  $\C$.  The work here is
motivated by trying to determine the $\nu_k$ for a given cone.  

\begin{example}~\cite[Example 2]{DK}
Consider the cone $\C=[0,\infty)^2$ equal to the non-negative
  orthant in $\R^2$.  All points $z$ in the positive orthant
  $(0,\infty)^2$ lie inside the cone and thus have a $2$-dimensional
  projection $\pi_\C(z)$.  All points in the non-positive
  orthant $(-\infty,0]^2$, the polar cone, are projected to the
origin, that is, they have $0$-dimensional projection.  As all remaining
points have $1$-dimensional projection, the projection volumes are
$\nu_0=\nu_2=1/4$ and $\nu_1=1/2$.  \qed
\end{example}

We will consider polyhedral sets formed by hyperplane arrangements.
We review the basics of the combinatorics of hyperplane arrangements
and refer the reader to~\cite{rstan_hype} for much more.  A real hyperplane
arrangement $\AA$ is a collection of codimension-one affine subspaces
of $\mathbb{R}^d$.  All arrangements appearing in this paper are
assumed finite.  The {\it rank} of an arrangement $\AA$ is defined
to be the dimension of the linear space spanned by the normal vectors
to its hyperplanes.  Specifically, if $\AA = \{H_1, \ldots, H_m\}$
  and $H_i = \{z \in \mathbb{R}^d : \eta_i \cdot z = b_i\}$, where $\eta_i$ is
    a non-zero vector in $\mathbb{R}^d$, then
\[
\textrm{Rk}(\AA) =  \dim(\textrm{Span}\{\eta_1, \ldots,
\eta_m\}).
\]
A region of $\AA$ is any connected component of the complement
of the union of all the hyperplanes in $\AA$.  We use $\REG(\AA)$ to denote  the set of all regions of $\AA.$  When all hyperplanes pass through the origin, and hence the closure of any
region forms a polyhedral cone, the arrangement is called {\it central}.  Generally, the closure of any region
forms a polyhedral cone or polytope.  Note that any polyhedral cone
can be thought of as a region in the hyperplane  arrangement
formed by taking the bounding hyperplanes of the cone.

Much of the combinatorics of a hyperplane arrangement is encoded by
its intersection poset.  Given an arrangement $\AA$, let
$L(\AA)$ be the set of all nonempty intersections of
collections of hyperplanes in $\AA$.  We include
$\mathbb{R}^d$ in $L(\AA)$ as the intersection of the empty
collection.  Define a partial order on $L(\AA)$ by reverse
inclusion of intersections, that is, $x \leq y$ in $L(A)$ if $ y
\subseteq x$.  Then $L(\AA)$ forms a poset ranked by
codimension $d -\dim(x)$.   If the arrangement is central, then  
$L(\AA)$ contains a unique top element $\hat{1}$ and forms a
lattice.  If it is also the case that $\hat{1} = \{0\},$ then the arrangement is called {\it essential.}

 The M\"{o}bius function $\mu$ of a finite poset $P$ is a function from intervals of
 $P$ to $\mathbb{Z}$ defined recursively by:
\begin{align*}
\mu(x,x) & = 1, \textrm{ for all } x \in P\\
\mu(x,y) & = - \sum_{x \leq z < y} \mu(x,z), \textrm{ for all } x < y \in P.
\end{align*} 

Write $\mu(x)$ for $\mu(\hat{0},x)$ when $P$ has a minimal element $\hat{0}.$ 
The characteristic polynomial of a rank $r$ graded poset with $\hat{0}$ and rank function $\rho$ is
\[
\chi_P(t) = \sum_{x \in P} \mu(x) t^{r-\rho(x)}.
\]
If $\AA$ is an essential central arrangement, then for $L(\AA)$ this equals
\[
\chi_{\AA} (t) =  \sum_{x \in L(\AA)} \mu(x) t^{\dim x}.
\]

\begin{example}
\label{ex:3lines-charpoly}
Consider the hyperplane arrangement $\AA \subset\mathbb{R}^2$
consisting of any three lines $H_1$, $H_2$, and $H_3$ through the origin.
 The intersection
lattice of this arrangement is
$L(\AA)=\{\mathbb{R}^2,H_1,H_2,H_3,\{0\}\}$ with its elements ordered
as $\mathbb{R}^2\le H_1\le\{0\}$, $\mathbb{R}^2\le H_2\le \{0\}$, and $\mathbb{R}^2\le H_3\le \{0\}$.  The
M\"obius function thus assigns the values $\mu(\mathbb{R}^2)=1$,
$\mu(H_1)=\mu(H_2)=\mu(H_3) = -1$ and $\mu(\{0\})=2$.  The characteristic polynomial
equals $\chi_\AA (t) = t^2-3t+2$.  \qed
\end{example}

In order to understand projection volumes, we will shift our
perspective and consider the \emph{average} projection volumes over
all regions of a given hyperplane arrangement. 

\begin{example}
For any three lines passing through the origin in $\mathbb{R}^2$, the
average two dimensional volume will always be $\frac{1}{6}$ as there
are $6$ regions.  To determine the zero dimensional volume of a cone,
consider the interior angle $\alpha$ of the cone.  The fraction of
volume which projects onto the vertex of the cone is $\frac{1}{2} -
\alpha$.  Thus the averages volumes are $(\frac{1}{6},\frac{1}{2}, \frac{1}{3})$.
\end{example}

\noindent Our main result, Theorem~\ref{thm:main}, relates average projection volumes to the
 characteristic polynomial.  Before stating the theorem, we
 recall a well-known result due to Zaslavsky~\cite{Zaslavsky} and, independently, Las Vergnas \cite{LV}:
\begin{equation} \label {n(A)}
 (-1)^r\chi_{\AA}(-1) = |\REG(\AA)|,
 \end{equation}
where $|\REG(\AA)|$ denotes the number of regions of $\AA$.
In particular, as the coefficients of $\chi_{\AA}$ alternate in sign \cite[Theorem 4, pg. 357]{Rota}, the sum of the the absolute values of the coefficients of
$\chi_{\AA}(t)$ equals the number of regions of $\AA$.

\begin{theorem}
\label{thm:main}
Let $\AA$ be a rank $r$ real hyperplane arrangement in $\R^d$. Then the sum
$\sum_{\C} \nu_k(\C)$ over all regions $\C$ of
$\mathcal{A}$ is given by the absolute value of the coefficient
of $t^{r-d+k}$ of the characteristic polynomial of $\mathcal{A}$.  
\end{theorem}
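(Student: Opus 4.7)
The plan is to execute the program sketched in the introduction: decompose $\sum_\C \nu_k(\C)$ into a sum of vertex-angle sums of zonotopes attached to localizations of $\AA$, and then identify each such vertex-angle sum with an absolute M\"obius value by chaining Zaslavsky's face-number formula with the Perles--Shephard angle-sum identity. First I would reduce to the case of an essential central arrangement: if $\AA$ has positive-dimensional radical $V=\bigcap H_i$, then $\AA$ is the product of its essential rank-$r$ reduction $\AA_0$ on $V^\perp$ with the trivial arrangement on $V$; regions, face dimensions (shifted by $d-r$), and the characteristic polynomial (in the paper's convention) are all compatible with this product. Affine arrangements can be handled by passing to recession cones. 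So assume $\AA$ is central and essential with $r=d$.

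For a $k$-dimensional face $F$ of a region $\C$, write $x=\mathrm{aff}(F)\in L(\AA)$, so that $F\subseteq x$ and the convex-analytic outward normal cone $N_\C(F)\subseteq x^\perp$. The cone $X_F=F+N_\C(F)$ of points projecting into $\mathrm{relint}(F)$ is an orthogonal product cone, so the standard product formula for solid angles gives
\[
\nu_\C(F)\;=\;\mathrm{Vol}_x(F)\cdot\mathrm{Vol}_{x^\perp}\bigl(N_\C(F)\bigr).
\]
Summing over all pairs $(\C,F)$ with $\dim F=k$ and grouping by $x$ produces two independent factors. The faces $F$ with $\mathrm{aff}(F)=x$ are the regions of the restricted central arrangement $\AA|_x$ on the linear subspace $x$, so they partition $x$ and $\sum_{F:\mathrm{aff}(F)=x}\mathrm{Vol}_x(F)=1$. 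On the other side, the regions $\C$ containing a fixed such $F$ correspond bijectively, via sign patterns on the hyperplanes through $x$, to the regions $D$ of the localization $\AA_x$ viewed as an essential central arrangement on $x^\perp$; the tangent cone to $\C$ at a point of $\mathrm{relint}(F)$ is $x+D$, so $N_\C(F)$ is the polar $D^\circ$ of $D$ in $x^\perp$. Consequently
\[
\sum_{\C}\nu_k(\C)\;=\;\sum_{\substack{x\in L(\AA)\\ \dim x=k}}\ \sum_{D\in\REG(\AA_x)}\mathrm{Vol}_{x^\perp}(D^\circ)\;=\;\sum_{\substack{x\in L(\AA)\\ \dim x=k}}\gamma_0\bigl(Z_{\AA_x}\bigr),
\]
where $\gamma_0$ denotes the sum of inner solid angles at the vertices of the zonotope.

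It remains to prove the identity $\gamma_0(Z_{\AA'})=|\mu_{L(\AA')}(\hat 0,\hat 1)|$ for every essential central arrangement $\AA'$. This is the content delivered by combining Zaslavsky's formula (face numbers of $Z_{\AA'}$ in terms of M\"obius values) with the Perles--Shephard identity (vertex-angle sums of a zonotope in terms of its face numbers). Granting it, the natural isomorphism of intervals $L(\AA_x)\cong[\hat 0,x]$ inside $L(\AA)$ gives $\gamma_0(Z_{\AA_x})=|\mu_\AA(x)|$, and Rota's sign theorem ($\mu$-values of a common rank share the sign $(-1)^{\rho(x)}$) lets me pull the absolute value outside the inner sum, identifying $\sum_{\dim x=k}|\mu_\AA(x)|$ with the absolute value of the $t^{r-d+k}$-coefficient of $\chi_\AA$.

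The main obstacle is the vertex-angle identity $\gamma_0(Z_{\AA'})=|\mu_{L(\AA')}(\hat 0,\hat 1)|$ itself. If the precise form of Perles--Shephard does not yield this directly, I would give an alternative inductive proof on the rank of $\AA'$. The inductive hypothesis covers every localization $\AA_x$ with $\dim x\ge 1$ (which has strictly smaller rank), handling the $k\ge 1$ cases of the main theorem. To extract $k=0$ one invokes the Brianchon--Gram-type identity $\sum_{F^*}(-1)^{\dim F^*}\phi(F^*,Z)=0$ for polytopes, together with $\chi_\AA(1)=0$ for non-trivial $\AA$ and the Rota sign rule, to compute
\[
\gamma_0(Z_\AA)\;=\;(-1)^{d-1}\!\!\sum_{\dim x\ge 1}\mu(x)\;=\;(-1)^d\mu(\hat 1)\;=\;|\mu(\hat 1)|.
\]
Verifying this cleanly, together with the bijective identification between regions of $\AA_x$ and tangent cones at a face of $\AA$ used in the product formula, is where the real work lies; the rest is bookkeeping.
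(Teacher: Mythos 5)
Your proposal follows essentially the same route as the paper: reduce to an essential central arrangement, factor each projection cone $X_F$ as an orthogonal product so that $\nu_\C(F)=\mathrm{Vol}_x(F)\cdot\mathrm{Vol}_{x^\perp}(N_\C(F))$, identify the normal cones of the regions containing $F$ with the vertex cones of a zonotope of the localization $\AA_x$, and reduce everything to the vertex-angle identity $\alpha_0(Z_{\AA'})=|\mu(\hat0,\hat1)|$, which is exactly the paper's Lemma~\ref{lemma:vertex} and is obtained, as you intend, by combining Zaslavsky's face-count formula with the Perles--Shephard relation and Shephard's equiprojectivity of zonotopes (the only detail you leave implicit there is that the generic projection of the zonotope has intersection lattice equal to $L_\AA$ with its coatoms removed, which makes the computation close). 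Your explicit bookkeeping of the factor $\sum_{F:\,\mathrm{aff}(F)=x}\mathrm{Vol}_x(F)=1$ is a slightly cleaner rendering of the paper's final step, but the argument is the same.
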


\section{Zonotopes}
The link between projection volumes of regions of hyperplane
arrangements and coefficients of characteristic polynomials is via
zonotopes.  Zonotopes are a rich class of polytopes equivalently
defined as affine projections of cubes, Minkowski sums of line
segments, or polytopes with all centrally symmetric faces.
  
  Zonotopes naturally arise from any
essential central hyperplane arrangement.  Throughout this section $\AA$ will always be an essential central arrangement.    Consider the face lattice
$F(\mathcal{A})$ of such an arrangement $\mathcal{A}.$ This poset records the
cellular structure of the decomposition of space as induced by the
hyperplanes.  Since the arrangement is essential and central, $F(\mathcal{A})$ will
have a unique bottom element corresponding to the origin, the atoms
will correspond to one dimensional rays and in general the $j$-dimensional cones are represented by rank $j$ elements of $F(\AA)$.  Next consider a
zonotope $Z$ formed by taking the Minkowski sum of  normals of all the
hyperplanes in $\mathcal{A}$.  Different choices of normals lead to geometrically distinct zonotopes,  but they are all combinatorially equivalent.  Indeed, let $F(Z)$ be the face lattice of $Z$.
Then $F(Z)$ is isomorphic to the order dual of $F(\mathcal{A})$ union
$\hat{0}$; see, for instance, \cite[Chapter 7]{Ziegler}.   Informally, for each region of $\mathcal{A}$ we have a
vertex of $Z$, for each pair of neighboring regions we have an edge of
$Z$, etc.

We will connect projection volumes of regions of $\mathcal{A}$ to
angles associated to the zonotope $Z$.  As an example, consider
$\nu_0(\mathcal{C})$ the zero dimensional projection onto a region
$\mathcal{C}$ of $\mathcal{A}$.  The set of points $x$ such that
$\pi_{\mathcal{C}}(x) = 0$ is given by the normal cone of $\mathcal{C}$,
i.e. the cone generated by all opposites of normals of hyperplanes
supporting $\mathcal{C}$.  The normal cone is simply the translate of
the cone induced by the corresponding vertex figure of $Z$, see
Figure~\ref{fig:normal}.

\begin{figure}[h]
\includegraphics[height=4in]{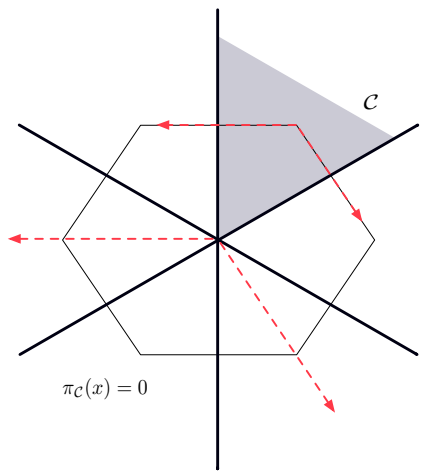}
\caption{An arrangement of three lines with the corresponding zonotope.  The normal cone to $\mathcal{C}$ is given by the dotted rays emanating at the origin.  The cone prescribed by the vertex figure sitting inside $\mathcal{C}$ is a translate of the normal cone.}\label{fig:normal}
\end{figure}

In general, the normal fan of $Z$ formed by taking the normal cones to
all faces of $Z$ is equal to the face fan of $\mathcal{A}$.  As
expected by the duality described above, we have even more generally
that the fan of the arrangement is equal to the face fan of the polar
of the zonotope, see~\cite[Corollary 7.18]{Ziegler}.

\section{Angle Sums of Polytopes}
In order to use  zonotopes to understand $k$-dimensional projections, we will need the theory of angle sums of polytopes.
Let $P$ be a polytope, $F$ a face of $P$ and $z$ a point in the relative
interior of $F$.  Define the angle of $P$ at $F$, $\alpha(P,F)$, as the
ratio of volume of an epsilon ball centered at $z$ which lies inside
$P$.  Specifically, if $B$ is a sufficiently small ball centered at $z,$ then $\alpha(P,F)$ is
the ratio of volume of $B \cap P$ to the volume of $B$.

\begin{example}
Let $F = P$, the improper face of $P$, then $\alpha(P,F) = 1.$
Let $F$ be any facet of $P$, then
$\alpha(P,F) = \frac{1}{2}.$
\end{example}

Define the $k$th angle sum of $P$, $\alpha_k(P)$, as the sum of all angles
over faces of dimension $k$:
$$\alpha_k(P) = \sum_{\textrm{dim}F = k} \alpha(P,F).$$
 
\begin{example}
Let $P$ be a $d$ dimensional polytope.  $\alpha_d(P) = 1$.  
$\alpha_{d-1}(P)  = \frac{1}{2} f_{d-1} = \frac{1}{2} \times \textrm{the number of faces of dimension $d-1$}.$
\end{example}

Angle sums of polytopes satisfy relations similar to those for the
face numbers of a polytope, where the face numbers $f(P) = (f_0, f_1, \ldots, f_{d-1}),$ also called the $f$-{\it vector}, record the number of faces in each dimension; see for example~\cite{Cam}~\cite{PS}.
Furthermore, there are strong connections between the angle sums and the
$f$-vector of a given polytope.  We review here a  result of
Perles and Shephard relating angle sums and face numbers for the class of
equiprojective polytopes.  

Equiprojective polytopes are polytopes such that all projections onto
sufficiently generic hyperplanes have the same $f$-vector.
Formally, let $P_x$ be the polytope obtained by orthogonally projecting
$P$ onto the hyperplane with normal $x$.  A polytope is called
{\it equiprojective} if the face numbers of $P_x$ are the same for all $x$ not parallel to any face of $P$.

\begin{theorem}[Perles-Shephard~\cite{PS}] \label{PS}
Let $P$ be a $d$-dimensional equiprojective polytope and $P^\prime$ a generic projection
of $P$.  Then for $0 \leq k \leq d-1$
$$\alpha_k(P) = \frac{1}{2}(f_k(P) - f_k(P^\prime)),$$
where we set $f_{d-1}(P^\prime) = 0$. 
\end{theorem}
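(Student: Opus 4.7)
The plan is to compute $f_k(P) - f_k(P')$ for a generic projection direction $x \in S^{d-1}$ by identifying it combinatorially with the shadow boundary of $P$, and then to average over $x$ to evaluate this count in terms of the interior angles of $P$. The equiprojective hypothesis enters at the end to convert the average into a pointwise identity.

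First, for a generic direction $x$ and $0 \leq k \leq d-2$, I would establish a bijection between the $k$-faces of $P' = \pi_x(P)$ and those $k$-faces $F$ of $P$ lying on the shadow boundary with respect to $x$ --- where, by definition, $F$ is on the shadow boundary iff its outer normal cone $N_P(F)$ meets $x^\perp$ in its relative interior. The bijection is $F \mapsto \pi_x(F)$: genericity forces $\pi_x|_F$ to be injective and dimension-preserving, and any outer normal $n \in x^\perp$ of a face $G$ of $P'$ lifts to a unique shadow-boundary face of $P$ of the same dimension. This yields the combinatorial identity
\[
f_k(P) - f_k(P') \;=\; \#\{k\text{-faces of } P \text{ not on the shadow boundary}\}.
\]

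Next, for each fixed $k$-face $F$ of $P$, I would compute the probability, over a uniform $x \in S^{d-1}$, that $F$ is not on the shadow boundary. This event is equivalent to $N_P(F)$ lying entirely in a closed halfspace of $\mathbb{R}^d$, hence to $x \in T_P(F) \cup -T_P(F)$, where $T_P(F)$ is the tangent cone at $F$; its lineality space is the $k$-dimensional tangent space $V_F$ of $F$, and its quotient by $V_F$ is the transverse cone $C_P(F) \subset V_F^\perp$ of dimension $d-k$. By rotational invariance of the uniform measure on $S^{d-1}$, the component of $x$ in $V_F^\perp$ has direction uniformly distributed on $S^{d-k-1}$, so
\[
\Pr_x[x \in T_P(F)] \;=\; \frac{|C_P(F) \cap S^{d-k-1}|}{|S^{d-k-1}|} \;=\; \alpha(P, F),
\]
and since the two antipodal events are almost surely disjoint, $\Pr_x[F \text{ off shadow boundary}] = 2\alpha(P, F)$. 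Summing over all $k$-faces of $P$ and invoking the equiprojective hypothesis (which makes $f_k(\pi_x(P))$ independent of generic $x$),
\[
f_k(P) - f_k(P') \;=\; \mathbb{E}_x\bigl[f_k(P) - f_k(\pi_x(P))\bigr] \;=\; \sum_{\dim F = k} 2\alpha(P, F) \;=\; 2\alpha_k(P),
\]
giving the theorem for $0 \leq k \leq d-2$. The case $k = d-1$ follows immediately from the convention $f_{d-1}(P') = 0$ together with $\alpha_{d-1}(P) = \tfrac{1}{2} f_{d-1}(P)$, since every facet has angle $\tfrac{1}{2}$.

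The main obstacle is verifying the bijection in the first step under the genericity conditions: one must check that $x$ avoids the affine hull of every proper face of $P$ (so $\pi_x$ is injective on each face and preserves dimension), that distinct shadow-boundary $k$-faces project to distinct $k$-faces of $P'$, and that every $k$-face of $P'$ arises from exactly one shadow-boundary $k$-face of $P$. These are standard but delicate polytope-theoretic points. Once established, the probabilistic step is clean: it amounts to identifying $\alpha(P, F)$ with the normalized spherical volume of the transverse cone and applying rotational invariance of the uniform measure on $S^{d-1}$.
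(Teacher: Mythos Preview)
The paper does not supply its own proof of this theorem; it is quoted from Perles--Shephard \cite{PS} and used as a black box in the proof of Lemma~\ref{lemma:vertex}. So there is nothing in the paper to compare your argument against.

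That said, your proof is correct and is essentially the original Perles--Shephard argument. The two steps---(i) for generic $x$, the $k$-faces of $\pi_x(P)$ are in bijection with the $k$-faces $F$ of $P$ whose normal cone is cut by $x^\perp$, so $f_k(P)-f_k(\pi_x(P))$ counts the $k$-faces with $x\in T_P(F)\cup(-T_P(F))$; and (ii) averaging over $x\in S^{d-1}$ converts that count into $2\alpha_k(P)$, with equiprojectivity replacing the average by the constant value---are exactly the mechanism in \cite{PS}. The genericity issue you flag is handled by choosing $x$ outside the finitely many hyperplanes orthogonal to the linear spans of the cones in the normal fan of $P$; then $x^\perp$ meets each $N_P(F)$ either through its relative interior or only at the origin, and the normal fan of $P'$ is precisely the slice of the normal fan of $P$ by $x^\perp$, which gives the bijection.
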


\begin{theorem}[Shephard~\cite{Shephard_zon}] \label{zone}
Zonotopes are equiprojective.
\end{theorem}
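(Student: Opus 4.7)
The plan is to reduce equiprojectivity to the fact that the $f$-vector of a zonotope is determined by the matroid of its generating vectors. Write $Z=[0,v_1]+\cdots+[0,v_n]$, fix a unit vector $x$ not parallel to any face of $Z$, and put $H=x^\perp$. Since orthogonal projection is linear and commutes with Minkowski sum,
$$\pi_H(Z)=[0,\pi_H(v_1)]+\cdots+[0,\pi_H(v_n)],$$
so $\pi_H(Z)$ is itself a zonotope, generated in $H$ by the projected vectors $\pi_H(v_i)$.

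Next I would invoke the standard fact that $f(Z)$ depends only on the matroid $M$ of the generators $v_1,\ldots,v_n$: the face lattice of a zonotope is anti-isomorphic to the face lattice of the dual essential central arrangement $\AA$, and for any flat $F\in L(\AA)$ the number of cones of $\AA$ of dimension $\dim F$ contained in $F$ equals $|\REG(\AA|_F)|$, which by Zaslavsky's formula (\ref{n(A)}) applied to the interval of $L(\AA)$ above $F$ is an invariant of the lattice of flats alone. So it suffices to show that the matroid $M'$ of the projected generators $\pi_H(v_1),\ldots,\pi_H(v_n)$ is independent of the generic choice of $x$.

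For this, observe that $\sum c_i\pi_H(v_i)=0$ in $H$ is equivalent to $\sum c_iv_i\in\R x$, so a subset $S\subseteq\{1,\ldots,n\}$ is independent in $M'$ if and only if $\{v_i:i\in S\}\cup\{x\}$ is linearly independent in $\R^d$. The facets of $Z$ are parallel to the hyperplanes spanned by rank-$(d-1)$ flats of $M$, and every proper flat of $M$ is contained in some rank-$(d-1)$ flat; hence the geometric hypothesis ``$x$ not parallel to any face of $Z$'' translates into the matroid-theoretic hypothesis ``$x$ lies in no proper flat of $M$.'' Under this hypothesis, $\{v_i:i\in S\}\cup\{x\}$ is independent precisely when $\{v_i:i\in S\}$ is independent in $M$ with $|S|\leq d-1$, so $M'$ is the rank-$(d-1)$ truncation of $M$. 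This truncation depends on $M$ but not on $x$.

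Combining the three steps, $f(\pi_H(Z))$ is determined by the truncation of $M$, an invariant of $Z$ alone, so $Z$ is equiprojective. The main obstacle is justifying the translation of the genericity hypothesis in the third paragraph, which is the only step that uses the specific geometric meaning of ``not parallel to any face''; this in turn rests on the identification of facets of $Z$ with rank-$(d-1)$ flats of $M$, together with the fact that every proper flat in the geometric lattice $L(\AA)$ is below some coatom.
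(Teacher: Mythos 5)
Your proposal is correct, and it is worth noting that the paper itself gives no proof of Theorem~\ref{zone} at all: it simply cites Shephard's 1967 paper, where the result is obtained by a geometric argument exploiting the central symmetry of the faces (tracking how the shadow boundary changes as the direction of projection varies). Your route is genuinely different and purely combinatorial: (i) a linear projection of a Minkowski sum of segments is again a Minkowski sum of segments, so $\pi_H(Z)$ is a zonotope; (ii) the $f$-vector of a zonotope is an invariant of the matroid of its generators, via the anti-isomorphism of $F(Z)$ with the face poset of the dual arrangement and Zaslavsky's count (Theorem~\ref{f_k}); (iii) for $x$ not parallel to any face, the matroid of the projected generators is the rank-$(d-1)$ truncation of $M$, hence independent of $x$. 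Step (iii) is exactly the observation the authors make inside the proof of Lemma~\ref{lemma:vertex} (``$L_{\AA^\prime}$ is just the truncation of $L_\AA$''), so in effect you show that the truncation computation already in the paper yields equiprojectivity of zonotopes with no appeal to Shephard; this is not circular within the paper, since Theorem~\ref{f_k} is independent of Theorems~\ref{PS} and~\ref{zone}. What Shephard's argument buys is greater generality and independence from arrangement theory; what yours buys is a short, self-contained proof in the language the paper is already using. The only points you should nail down are routine: assume $Z$ full-dimensional and the $v_i$ nonzero (discard loops, note parallel generators do not change the combinatorics), verify both directions of the correspondence between facet direction spaces and spans of rank-$(d-1)$ flats (every coatom of $L(\AA)$ does arise as the direction hyperplane of a facet, which is what the contrapositive of your genericity translation actually needs), and observe that genericity also rules out $\pi_H(v_i)=0$. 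You flag the facet--coatom identification yourself, and it is standard, so I see no genuine gap.
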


In order to use the above theorems we need to understand $f_k(P)$ and
$f_k(P^\prime)$ for a zonotope $P$.  If $P$ is $d$-dimensional,
$f_0(P)$ can be computed from the corresponding arrangement by
(\ref{n(A)}). The general formula is as follows.

\begin{theorem}[Zaslavsky~\cite{Zaslavsky}] \label{f_k}
Let $\AA$ be an  arrangement of hyperplanes in $\R^d.$  Then the face numbers of any zonotope of $\AA$ are given by
$$f_k = \displaystyle\sum_{\dim x = d-k} \displaystyle\sum_{x \le y} (-1)^{\dim x-\dim y} \mu(x,y).$$
\end{theorem}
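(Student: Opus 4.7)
The plan is to exploit the combinatorial duality between $F(Z)$ and $F(\mathcal{A})$ recalled in Section 3, and then apply Zaslavsky's formula \eqref{n(A)} to the induced subarrangements of $\mathcal{A}$. Since $F(Z)$ is the order dual of $F(\mathcal{A}) \cup \{\hat 0\}$, each $k$-face of $Z$ corresponds bijectively to a cone of dimension $d-k$ in $\mathcal{A}$, so
\[
f_k(Z) \;=\; \#\{(d-k)\text{-dimensional cones of } \mathcal{A}\}.
\]

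Next I would stratify these cones by their linear span. Every $(d-k)$-dimensional cone of $\mathcal{A}$ has a unique smallest containing flat $x \in L(\mathcal{A})$ with $\dim x = d-k$, and for fixed such $x$ the $(d-k)$-cones with linear span $x$ are naturally in bijection with the regions of the induced subarrangement $\mathcal{A}^x := \{H \cap x : H \in \mathcal{A},\ x \not\subseteq H\}$ on $x$. (Near a point in the relative interior of $x$, hyperplanes of $\mathcal{A}$ that contain $x$ contribute nothing locally, while the remaining hyperplanes cut $x$ into precisely the chambers of $\mathcal{A}^x$.) This gives
\[
f_k(Z) \;=\; \sum_{\substack{x \in L(\mathcal{A}) \\ \dim x = d-k}} |\REG(\mathcal{A}^x)|.
\]

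For each such $x$ I would identify $L(\mathcal{A}^x)$ with the upper interval $[x,\hat 1]$ of $L(\mathcal{A})$: each nonempty intersection of hyperplanes in $\mathcal{A}^x$ equals $y \cap x = y$ for some $y \in L(\mathcal{A})$ with $y \subseteq x$ (equivalently $y \geq x$ in $L(\mathcal{A})$), and the M\"obius values agree on corresponding intervals. Since $\mathcal{A}^x$ has rank $\dim x$ and $\chi_{\mathcal{A}^x}(t) = \sum_{y \geq x} \mu(x,y)\, t^{\dim y}$, formula \eqref{n(A)} gives
\[
|\REG(\mathcal{A}^x)| \;=\; (-1)^{\dim x} \chi_{\mathcal{A}^x}(-1) \;=\; \sum_{y \geq x}(-1)^{\dim x - \dim y}\mu(x,y),
\]
and substituting into the previous display yields the claimed formula. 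The only delicate point is the local bijection in the second paragraph between $(d-k)$-cones with linear span $x$ and regions of $\mathcal{A}^x$; once this is in place, the rest is bookkeeping using \eqref{n(A)} and the interval structure of the intersection lattice.
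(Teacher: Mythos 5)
Your argument is correct. Note, however, that the paper does not prove this statement at all: it is quoted as Theorem~\ref{f_k} directly from Zaslavsky's memoir, so there is no internal proof to compare against. What you have written is essentially the classical derivation (in the spirit of Zaslavsky's own): translate $f_k(Z)$ into the number of $(d-k)$-dimensional faces of the fan of $\AA$ via the duality $F(Z)\cong F(\AA)^*\cup\hat{0}$ (equivalently, normal cones of $k$-faces of $Z$ are the $(d-k)$-cones of the arrangement), stratify those faces by the flat $x$ they span, identify the faces spanning $x$ with the regions of the restricted arrangement $\AA^x$ whose intersection poset is the upper interval $\{y\ge x\}$ of $L(\AA)$, and then apply the region-count formula (\ref{n(A)}) to each $\AA^x$ to get $|\REG(\AA^x)|=\sum_{y\ge x}(-1)^{\dim x-\dim y}\mu(x,y)$. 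All three steps are sound; the only caveats are cosmetic and consistent with the paper's standing hypotheses in Section~3: the zonotope duality you invoke requires $\AA$ to be central (and the interval notation $[x,\hat{1}]$ presumes the top element exists), and one should check, as you do implicitly, that the sign $(-1)^{\dim x}$ in Zaslavsky's formula is the correct normalization for the arrangement $\AA^x$ viewed inside the ambient space $x$.
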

\noindent In particular, setting $k=0$ recovers (\ref{n(A)}). 

\begin{lemma}
\label{lemma:vertex}
Let $\AA$ be an essential central hyperplane arrangement in $\R^d$ and let $Z$ be an associated zonotope.   Then $\alpha_0(Z) = |\mu_\AA (\hat{0}, \hat{1})|.$
\end{lemma}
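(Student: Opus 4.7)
The plan is to apply the Perles--Shephard formula (Theorem~\ref{PS}) to $Z$, which is equiprojective by Theorem~\ref{zone}, yielding
\[
\alpha_0(Z) \;=\; \tfrac{1}{2}\bigl(f_0(Z)-f_0(Z')\bigr)
\]
for a generic orthogonal projection $Z'$. I would pick a generic linear hyperplane $H\subset\mathbb{R}^d$ and set $Z'=\pi_H(Z)$: this is a zonotope in $H\cong\mathbb{R}^{d-1}$ whose associated arrangement is the generic restriction $\AA|_H=\{H_i\cap H:H_i\in\AA\}$, still essential and central. By equation~(\ref{n(A)}), $f_0(Z)=(-1)^d\chi_\AA(-1)$ and $f_0(Z')=(-1)^{d-1}\chi_{\AA|_H}(-1)$, so the lemma reduces to the identity
\[
\chi_\AA(-1)+\chi_{\AA|_H}(-1) \;=\; 2\,\mu_\AA(\hat{0},\hat{1}).
\]

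The combinatorial heart of the argument is to identify $L(\AA|_H)$ in terms of $L(\AA)$. For generic $H$, the map $\phi(y)=y\cap H$ is a rank-preserving order isomorphism from the subposet of flats of rank at most $d-2$ in $L(\AA)$ onto $L(\AA|_H)\setminus\{\hat{1}'\}$, while all $1$-dimensional flats of $L(\AA)$ together with $\hat{1}_\AA$ collapse to the single top $\hat{1}'=\{0\}$ of $L(\AA|_H)$. M\"obius recursion together with $\sum_{y\in L(\AA)}\mu(\hat{0},y)=0$ then gives
\[
\mu_{\AA|_H}(\hat{0},\hat{1}') \;=\; \mu_\AA(\hat{0},\hat{1}) \,+\, \sum_{\rho(y)=d-1}\mu_\AA(\hat{0},y).
\]

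Substituting into the two characteristic polynomials, contributions from flats of rank $\le d-2$ cancel pairwise between $\chi_\AA(-1)$ and $\chi_{\AA|_H}(-1)$ because the one-dimension drop flips the sign: $(-1)^{\dim y}+(-1)^{\dim y-1}=0$. The surviving terms come from the rank-$(d-1)$ flats and $\hat{1}_\AA$ on the $Z$ side, and from $\hat{1}'$ on the $Z'$ side; with the M\"obius identity above these telescope to $2\mu_\AA(\hat{0},\hat{1})$. Since $L(\AA)$ is a geometric lattice of rank $d$, $(-1)^d\mu_\AA(\hat{0},\hat{1})=|\mu_\AA(\hat{0},\hat{1})|$, giving the stated value.

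The main obstacle is justifying the lattice identification for generic $H$: one must verify that distinct flats of rank at most $d-2$ have distinct, dimension-transverse images under $\phi$, and that $\phi$ reflects inclusion. These are open dense conditions on $H$, involving finitely many transversality requirements between pairs of flats of $L(\AA)$, and are compatible with the projection-direction genericity required for Perles--Shephard.
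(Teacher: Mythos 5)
Your proposal is correct and follows essentially the same route as the paper: apply Perles--Shephard together with Shephard's equiprojectivity of zonotopes, count vertices of $Z$ and of its generic projection $Z'$ via Zaslavsky's region formula, identify the intersection lattice of the projected arrangement as the truncation of $L(\AA)$ (your restriction $\AA|_H$ coincides with the paper's arrangement of projected normals $\eta_i'$, since for $z\in H$ one has $\eta_i\cdot z=\pi_H(\eta_i)\cdot z$), and finish with the alternating-sum/M\"obius identity, which is just the paper's use of $\chi_\AA(1)=\chi_{\AA'}(1)=0$ phrased via the recursion. The only differences are notational (evaluating $\chi$ at $-1$ and cancelling term by term, rather than bookkeeping the coefficients $a_i,b_i$), so no gap to report.
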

\begin{proof}

Write the characteristic polynomial of $\AA$ as
$$\chi_\AA(t) =a_0~t^d - a_1~t^{d-1} + \dots + (-1)^{d-2}a_{d-2}~t^2+ (-1)^{d-1} a_{d-1}~t + (-1)^d~a_d,$$
where all the $a_i$ are nonnegative.  (In fact they are all positive.)
As noted above,  the vertices of $Z$ correspond to the regions of $\AA,$ so equation (\ref{n(A)}) tells us that 
$$f_0(Z) = a_0 + a_1+ \dots + a_{d-2} + a_{d-1} + a_d.$$  
 
 Let $Z^\prime$ be the projection of $Z$ into a generic hyperplane $x$.  What is $f_0(Z^\prime)?$ Evidently, $Z^\prime$ is the Minkowski sum of the images of the projections of the normals $\eta_i$ which determined $Z,$ and hence is a $(d-1)$-dimensional zonotope.   Denote the images of the normals by $\eta^\prime_i$ and the  associated arrangement by $\AA^\prime.$  The intersection poset of any arrangement of linear hyperplanes is completely determined by the  dimensions of the various intersections of the hyperplanes or,  equivalently, by the ranks of all the possible subsets of normal vectors.  The choice of $x$ ensures that for any subset of the $\eta_i$ its rank is the same as the corresponding subset of $\eta^\prime_i$ except when the former has rank $d.$  In that case the rank of the $\eta^\prime_i$ is only $d-1.$ From this we see that $L_{\AA^\prime}$ is just the truncation of $L_\AA.$  Specifically, $L_{\AA^\prime}$  is $L_\AA$ with its coatoms removed.

Now write the characteristic polynomial of $\AA^\prime$ as
$$ \chi_{\AA^\prime}(t) = b_0~ t^{d-1} - b_1~t^{d-2} + \dots + (-1)^{d-3} b_{d-3} ~t^2+ (-1)^{d-2} b_{d-2}~ t + (-1)^{d-1} ~b_{d-1},$$
where all of the $b_i$ are positive.  Since $L_\AA$ and $L_{\AA^\prime}$ agree up to rank $d-2, \ a_i = b_i$ for all $0 \le i \le d-2.$  Theorems \ref{PS}, \ref{zone} and \ref{f_k} imply that $\alpha_0(Z) = \frac{1}{2} (a_{d-1}+a_d - b_{d-1}).$  Since the alternating sums of the $a_i$ and the $b_j$ are zero, $a_{d-1}-a_d=b_{d-1}.$  By definition, $a_d = | \mu_\AA(\hat{0},\hat{1})|.$
\end{proof}

The same argument allows us to compute all of the angle sums of a zonotope in terms of $L_\AA.$

\begin{theorem}
\label{thm:dual}
Let $\AA$ be an essential central hyperplane arrangement and let $Z$ be an associated zonotope.   Then $\alpha_i(Z)$ is the  coefficient of $t^i$ in the characteristic polynomial of the {\bf order dual} of $L_\AA.$
\end{theorem}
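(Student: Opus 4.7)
The plan is to imitate the proof of Lemma~\ref{lemma:vertex} but with an arbitrary rank-$k$ flat $x$ playing the role of $\hat{0}$. All the necessary machinery is already in place: Perles--Shephard together with Theorem~\ref{zone} gives $\alpha_k(Z)=\tfrac{1}{2}(f_k(Z)-f_k(Z'))$ for $0\le k\le d-1$; Zaslavsky's formula (Theorem~\ref{f_k}) writes each $f_k$ as a double Möbius sum over the intersection lattice; and the proof of the lemma shows that $L_{\AA'}$ is the truncation of $L_\AA$, agreeing with $L_\AA$ through rank $d-2$ and collapsing the top two ranks of $L_\AA$ into a single new $\hat{1}$.

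Because $L_\AA$ is geometric, the sign rule $\mu(x,y)=(-1)^{\dim x-\dim y}|\mu(x,y)|$ turns Zaslavsky's inner sum into $\sum_{y\ge x}|\mu(x,y)|$. Fix a rank-$k$ flat $x$ with $k\le d-2$ and let $x'$ be the corresponding flat of $\AA'$. The Möbius contributions from flats of rank at most $d-2$ above $x$ agree with those above $x'$ by the truncation description of $L_{\AA'}$. Applying the vanishing identity $\sum_{y\in[x,\hat{1}]}\mu(x,y)=0$ inside both $L_\AA$ and $L_{\AA'}$ and then subtracting --- precisely the linear step used to derive $a_{d-1}-a_d=b_{d-1}$ in the lemma --- isolates the top contributions and forces
\[
\sum_{y\ge x}|\mu_\AA(x,y)|\;-\;\sum_{y'\ge x'}|\mu_{\AA'}(x',y')|\;=\;2\,|\mu_\AA(x,\hat{1})|.
\]
Summing over all rank-$k$ flats of $\AA$ and halving yields $\alpha_k(Z)=\sum_{\dim x=d-k}|\mu_\AA(x,\hat{1})|$ for $0\le k\le d-2$; the cases $k=d-1$ (where $f_{d-1}(Z')=0$ by the PS convention) and $k=d$ ($\alpha_d(Z)=1$) are immediate by direct inspection.

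It remains to recognize this sum as the $t^k$-coefficient of $\chi_{L_\AA^*}$. In $L_\AA^*$ the rank function is $\dim x$ and $\mu_{L_\AA^*}(\hat{0},x)=\mu_\AA(x,\hat{1})$, so
\[
\chi_{L_\AA^*}(t)\;=\;\sum_{x\in L_\AA}\mu_\AA(x,\hat{1})\,t^{d-\dim x},
\]
whose $t^k$-coefficient equals $\sum_{\dim x=d-k}\mu_\AA(x,\hat{1})$; the geometric-lattice sign rule makes this $(-1)^{d-k}\alpha_k(Z)$, so interpreting ``coefficient'' with the same absolute-value convention adopted in Theorem~\ref{thm:main} completes the identification. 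The main obstacle is the middle step: running the lemma's telescoping argument simultaneously for every rank-$k$ flat and tracking how the many rank-$(d-1)$ flats above $x$ in $L_\AA$ cancel against the single new top of $L_{\AA'}$, leaving behind exactly $|\mu_\AA(x,\hat{1})|$.
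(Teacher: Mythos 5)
Your proposal is correct and follows essentially the same route as the paper: the paper's proof of Theorem~\ref{thm:dual} is precisely ``apply the reasoning of Lemma~\ref{lemma:vertex}'' to obtain $\alpha_i(Z)=\sum_{\dim x=d-i}\mu_\AA(x,\hat{1})$ (up to sign) and then identify $\mu_\AA(x,\hat{1})=\mu_{L^*_\AA}(\hat{0},x)$, which is exactly what you carry out flat-by-flat via Perles--Shephard, Zaslavsky's formula, and the truncation description of $L_{\AA'}$. Your explicit handling of the sign/absolute-value convention is a detail the paper glosses over, and it does not alter the argument.
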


\begin{proof}
Applying the same reasoning as in the proof of the above lemma we find that 
$$\alpha_i(Z) = \displaystyle\sum_{\dim x=d-i} \mu_\AA (x, \hat{1}).$$
The result follows as $\mu_\AA(x,\hat{1}) = \mu_{L^*_\AA} (\hat{0}, x)$.  
\end{proof}

\section{Proof of main theorem}

We are now ready to prove our main theorem.

\medskip
\noindent\textbf{Theorem~\ref{thm:main}}.\
\textit{
Let $\AA$ be a central rank $r$ real hyperplane arrangement in $\R^d$. Then the sum $\sum_{\C} \nu_k(\C)$ over all regions $\C$ of $\mathcal{A}$ is given by the absolute value of the coefficient of $t^{r-d+k}$ of the characteristic polynomial of $\mathcal{A}$.  
}
\medskip

\begin{proof}

Our first observation is that it is sufficient to prove this for
essential arrangements.  Indeed, suppose that $\dim \hat{1} = d-r >
0.$ Let $V$ be the orthogonal complement of $\hat{1}$ and consider the
hyperplane arrangement $\AA_V = \{H_1 \cap V, \dots, H_m \cap V\}
\subseteq V.$ Now, $L(\AA_V) \cong L(\AA),$ so their characteristic
polynomials are identical.  Furthermore, the regions of $\AA_V$
correspond bijectively to the regions of $\AA$ by $C \cap V
\leftrightarrow C$ and $\nu_k(C \cap V) = \nu_{k+d-r}(C).$ Thus the
theorem holds for $\AA$ by applying it to the essential arrangement
$\AA_V.$ Hence, from here on we will assume that $\AA$ is an essential
central arrangement.

Let $x \in L(\AA)$ and let $\mathcal{F}_x$ be the set of faces of $\AA$ whose affine span is $x$.  
Our second observation is that it is sufficient to prove 
 
\begin{equation} \label{eq:flats}
\displaystyle\sum_{F \in \mathcal{F}_x} \displaystyle\sum_{\stackrel{F \subseteq \C}{\C \in \REG(\AA)}} \nu_\C(F) = \mu_\AA(x).
\end{equation}

To see that this is sufficient, note that $\nu_k$ is the sum over all pairs $\nu_\C(F)$ with $\dim F =k, \C \in \REG(\AA)$ and $F \subseteq \C.$  Each such face $F$ is contained in exactly one $\mathcal{F}_x$ with the rank of $x$ equal to $d - k,$ and the sum of the $\mu(x)$ of rank $d-k$ is the required coefficient of the characteristic polynomial. 

Fix $Z$  a zonotope for $\AA.$   Recall that an element of the intersection lattice $x \in L(\AA)$ corresponds to a collection of 
parallel faces of $Z$, the duals of the cones $F \in \mathcal{F}_x.$ Each of the faces in the zonotope corresponding to $x$ is formed by taking the sum:
$$\sum \lambda_i \eta_i : -1 \leq \lambda_i \leq 1$$ over all
hyperplanes $H_i = \{x \in \mathbb{R}^d : \eta_i  x = 0 \}$ in $x$ and translating by some $\pm 1$ combinations of the
$\eta_i$ corresponding to hyperplanes not in $x$  \cite[Chapter 7]{Ziegler}.  In particular all of  these faces are isometric.  

Let $Z_x$ be one of the isometric faces of $Z$ corresponding to the intersection $x$ and let $\AA_x$ be the subarrangement of $\AA$ consisting of those hyperplanes which contain $x.$  For any $x<\hat{1},$  
$\AA_x$ is a non-essential arrangement in $\mathbb{R}^d$ but forms an
essential central arrangement if we project onto $V_x$, the
orthogonal complement of $x$.  The lattice of the projected arrangement is isomorphic to  the interval $[\hat{0},x]$ in the original lattice.
Furthermore, we see by the form of $Z_x$ given above, a
zonotope corresponding to the projected arrangement is simply (a translate
of) $Z_x$.  Lemma~\ref{lemma:vertex} then connects the vertex angle sum of $Z_x$ and the M\"{o}bius value at the intersection $x$:
 $$\alpha_0(Z_x) = \mu_{L_{\AA}}(\hat{0},x).$$  

This connects the vertex angle sum of $Z_x$ with the
M\"{o}bius function at $x$. Let $F$ be the cone in $F(\AA)$ whose dual is $Z_x.$  Next we connect the vertex angle sum of
$Z_x$ with $\displaystyle\sum_{\C \in \REG(\AA)} \nu_\C(F).$  

For a particular $\C$, which points of $\R^d$ is $\pi_\C(y)$  in   $F?$
The set of all such points form a
polyhedral cone which we previously denoted $X_F$.  This projection cone is given by the positive span of the normal cone in $V_x$ of the origin with respect to $\C \cap V_x$ and  $F$ itself.   Since these cones lie in orthogonal subspaces they form a product cone.

  Let $w$ be the vertex of (the translated) $Z_x$ which corresponds by
  duality to $\C.$ As the normal fan of $Z_x$ in
  $V_x$ is $\AA_x$, its normal cone is isometric to the cone used to
  determine $\alpha(Z_x, w)$, the angle sum of $Z_x$ at $w$. Thus
  $\nu_\C(F) = \alpha(Z_x,w).$ By duality, for a fixed $F,$ there is a
  one-to-one correspondence between pairs $F \subseteq \C$ and the
  vertices of $Z_x.$ Hence, by Lemma \ref{lemma:vertex}, for a
  particular $F,$
$$\displaystyle\sum_{F \subseteq \C \in \REG(\AA)} \nu_\C(F) = \mu_\AA(x).$$
As the set of all of the interiors $F \in \REG(\AA_x)$ form an open dense subset of $V_x$ of full measure and as previously noted the normal cones are all products, (\ref{eq:flats}) holds and we are done.

\end{proof}

Theorem~\ref{thm:main} can be shown to hold in the greater generality of
all finite affine arrangements. As we know of no applications we only sketch the proof.   The main difficulty in this setting
is extending the notion of projection volume.  The regions of affine arrangements can be arbitrary polyhedral sets and hence  the definition of $\nu(F)$ from Section \ref{projvol} is no longer adequate.  Instead, we again let $X_F$ be the set of all points $z \in \R^d$ such that $\pi_C(z)$ is in the interior of $F.$  But now we define 
$$\nu_\C(F) = \lim_{r \to \infty} \frac{|B_r(a) \cap X_F|}{|B_r(a)|},$$
where $B_r(a)$ is the ball of radius $r$ around a fixed point $a \in \R^d.$ This is
necessary, for example, for bounded faces of the arrangement which are
not zero dimensional.  Such bounded faces in fact have zero projection
volumes. After proving that the above limit exists and is independent of $a$ the argument in the proof of the main theorem can be repeated.

\section{Isometric Regions}

As an obvious corollary to Theorem~\ref{thm:main} we see that for an
arrangement with all isometric regions, the coefficients of the
characteristic polynomial give the \emph{precise} projection volumes
for any fixed region.

Reflection (or Coxeter) arrangements constitute a large class of such
arrangements.  In~\cite{DK}, projection volumes of reflection
arrangements were studied directly heavily utilizing the structure of the reflection group.  The corollary above was
established for certain families of reflection groups and conjectured
for all reflection groups.  Theorem~\ref{thm:main} now affirmatively
answers this conjecture.

We leave as an open question whether or not there exist other examples
of arrangements with isometric cones.

\begin{problem}
Does there exists a real central hyperplane arrangement with all isometric cones that is not a reflection arrangement?
\end{problem}

\end{document}